\def\be{\begin{equation}}
\def\ee{\end{equation}}
\newtheorem{theorem}{Theorem}[section]
\newtheorem{lemma}[theorem]{Lemma}
\newtheorem{definition}[theorem]{Definition}
\newenvironment{proof}{\noindent{\bf Proof \/}}{\hfill $\Box$\vskip 0.1in}
\def\Expect{\mathbb{E}~}
\def\Prob{\mathbb{P}}
\newcommand{\bfalpha}{\mbox{\boldmath$\alpha$}}
\newcommand{\Exp}{{\mathbb E}\,}
\newcommand{\RL}{{\mathbb R}}
\newcommand{\downleftarrow}{\smash{%
\raisebox{.25ex}{%
\setlength{\tabcolsep}{-1.2pt}%
\begin{tabular}{@{}lr@{}}%
\multicolumn2r{$\downarrow$} \\[-3.8ex] \multicolumn2c{$\leftarrow\,$}%
\end{tabular}}}}
\newcommand{\uprightarrow}{\smash{%
\raisebox{-.25ex}{%
\setlength{\tabcolsep}{-1.2pt}%
\begin{tabular}{@{}lc@{}}%
\multicolumn1l{$\uparrow$} \\[-1.8ex] \multicolumn2c{$\,\rightarrow$}%
\end{tabular}}}}
\begin{document}
\begin{frontmatter}

\title{Non-Existence of Stabilizing Policies\\
for the Critical Push-Pull Network and Generalizations}

\author[UQ]{Yoni Nazarathy}
\author[UQ]{Leonardo Rojas-Nandayapa}
\author[YK]{Thomas S. Salisbury
\footnote{Research supported in part by NSERC.}
}

\address[UQ]{School of Mathematics and Physics, The University of Queensland, Brisbane, Australia.}
\address[YK]{Department of Mathematics and Statistics, York University, Toronto, Canada.}


\date{ \today}

\begin{abstract}
The push-pull queueing network is a simple two server, two job-stream example in which servers either serve jobs from queues or generate new arrivals. Previous work has shown that there exist non-idling policies that stabilize the system in the positive recurrent sense for all parameter settings in which the network may be rate stable, except for the case where processing rates are equal on each job stream (critical).  It was conjectured in Kopzon, Nazarathy and Weiss (2009) that there is no policy that makes the network positive recurrent (stable) in the critical case. Our contribution here is a proof for that conjecture. We also consider generalizations where it is shown that a stabilizing non-idling policy does not exist in the critical case. In this respect we put forward a general sufficient condition for non-stabilizability of queueing networks.
\end{abstract}

\end{frontmatter}

\section{Introduction}

Controlled queueing networks are primary objects of study in operations research and applied probability as they provide sensible models for a variety of engineering, communications and service situations. Alongside performance analysis and optimal control, stability analysis plays a central role in the theory and has far reaching implications in the design of systems.   A comprehensive introduction to stability properties of controlled queueing networks is \cite{bramsonBook2008}. See also \cite{bookChenYao2001} and~\cite{bookMeyn2008}.

The push-pull queueing network, introduced in \cite{WeissKopzon112}, is perhaps the simplest example of a queueing network that generates its own input and is able in certain cases (described below) to operate with servers fully utilized while keeping queues stochastically stable. Following \cite{WeissKopzon112}, the network was analyzed with exponential processing times in \cite{kopzon2009push} and general processing times in \cite{nazarathy2010positive}. Further network generalizations are in \cite{GuoLefeberNazarathyWeissZhang11}. The novelty of this network is that by allowing servers to split processing power between ``arrival generation'' and ``service of current jobs'', one can often find stabilizing control policies in which servers never idle. This stands in contrast to the majority of queueing network models (surveyed in \cite{bramsonBook2008}, \cite{bookChenYao2001} and~\cite{bookMeyn2008}) in which high utilization cannot be achieved without having to endure high congestion levels.

\begin{figure}[ht]
\centering
\includegraphics[width=3.5in]{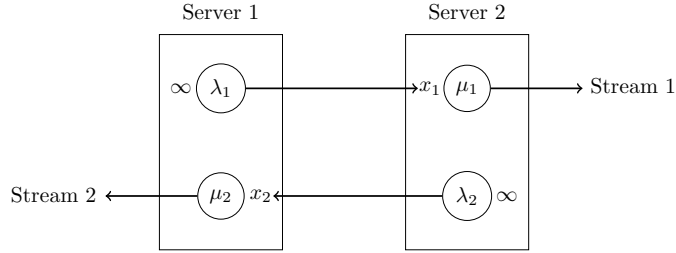}
\caption{The push-pull network: Two servers are working non-stop on two job streams, $i=1,2$. Push operations, labeled $\lambda_i$, move jobs to the queues $x_i$ served by pull operations labeled $\mu_i$.} 
\label{fig.PushPull}
\end{figure}

The push-pull network is illustrated in Figure~\ref{fig.PushPull}. Two servers are working on two job streams. Each job stream begins with a {\em push} operation (labeled $\lambda_i,\, i=1,2$) and follows with a {\em pull} operation (labeled $\mu_i,\, i=1,2$).  The push operation produces jobs, serving a virtual unlimited supply of raw materials, and moves them down to the pull operation. The pull operation is associated with a queue (labeled $x_i,\, i=1,2$). At any state, a scheduling policy (control) determines for each server whether it should work on {\em push} or {\em pull}. Pushing may always be performed as it is assumed that there is an infinite supply of raw materials at the start of each stream. As opposed to that, pulling may only be performed if the associated queue is non-empty.  We are interested in non-idling (fully utilizing) policies.

It is standard to 
associate the network with a probability space. Specifically, four independent $i.i.d.$ sequences of non-negative random variables with positive finite mean are taken as primitives of the construction. These random variables signify the durations of consecutive operations. The strictly positive parameters $\lambda_i$ and $\mu_i$ are the processing rates of these operations (i.e.\ inverses of mean durations). A full construction of the model is described in \cite{nazarathy2010positive}. 

In this paper we shall impose a simplifying assumption in which the random variables have a memory-less exponential distribution. In this case, if preemption of operations is allowed and it is assumed that job durations are not known upon commencement, then the state space of the system can be represented by ${\cal S} = \mathbb{Z}_+^2$, where a state $(x_1,x_2) \in {\cal S}$ implies that there are $x_i$ jobs in the queue $i$. In our setting, any deterministic and {\em non-idling} control policy can be represented by a function,
\[
{\cal P} : {\cal S} \rightarrow \{\mbox{push},\mbox{pull}\}^2,
\]
whose image represents the actions of the servers: $\big(\mbox{action server 1},\mbox{action server 2} \big)$. Since idling is not an option, the control should satisfy for any integer $x\ge0$ the following conditions
\[
{\cal P}\big(  (x,0) \big) \in   \{\mbox{push}\}  \times\{\mbox{push},\mbox{pull}\}, 
\quad
{\cal P}\big(  (0,x) \big) \in   \{\mbox{push},\mbox{pull}\} \times \{\mbox{push}\}.
\]
Thus it is also implied that ${\cal P} \big( (0,0) \big)  = (\mbox{push},\mbox{push})$.\

Given a policy ${\cal P}$, the evolution of the network is well represented by a Markov jump process
(see for example \cite{bookAsmussen2003}, Chapter~II). 
In this case it is natural to define the following stability properties.
\begin{definition}
\label{def:nonStab}
A network controlled by a policy ${\cal P}$ is said to be {\bf stable} if the associated Markov jump process has a positive recurrent class such that the process enters this class with probability~1. Further, a given network is said to be {\bf stabilizable} if there exists a policy ${\cal P}$ for which the network is stable. Otherwise the network is {\bf non-stabilizable}.
\end{definition}
We emphasize that the above definition relates to policies in which the servers are fully utilized. Achieving stability when idling is allowed (no pushing) is trivial.

In \cite{kopzon2009push} (see also \cite{nazarathy2010positive} for general processing times), the authors have shown that the push-pull network is stabilizable in the case $\lambda_i < \mu_i$, $i=1,2$ or in the case $\lambda_i > \mu_i$, $i=1,2$. Further, it is obvious by capacity considerations that there is no such policy if $\lambda_1 < \mu_1$, $\lambda_2 > \mu_2$, or in the alternative where the indexes are switched. The remaining open question is the critical case, 
\[
\lambda_i = \mu_i,\,\, i=1,2.
\]
In that case, while there exist simple rate-stable policies in which the associated Markov jump process is null recurrent (see Theorem~1 in \cite{kopzon2009push} and the discussion in Sections 1 and 2 in \cite{nazarathy2010positive}), it was conjectured in \cite{kopzon2009push} (page 83) that the network is non-stabilizable. Our key contribution in this note is in settling that conjecture. Our proof is based on a simple yet non-trivial martingale stopping argument in which a linear martingale is constructed for {\em any} possible policy. 

A second contribution of the paper is that we are able to extend the method for proving non-stabilizability  to more general networks. The novelty of our method is that it puts forward a simple matrix rank criterion as a sufficient condition required for a network to be non-stabilizable. As we show, this condition turns out to be powerful enough to be applied in various types of queueing networks that generate their own input. We prove non-stabilizability for two structured generalizations of the push-pull network in certain critical cases.  First we handle a ring with an even number of push-pull servers (Figure~\ref{fig.ring}). Second we handle a two server network with multiple re-entrant lines (Figure~\ref{fig.rlinePushPull}). Stable policies have been found for both of these types of models for certain non-critical parameter settings in \cite{GuoLefeberNazarathyWeissZhang11}. 

The remainder of this paper is structured as follows. In Section~2 we prove that the critical push-pull is non-stabilizable. In Section~3 we prove similar results for two generalizations: A ring with an even number of critical push-pull servers and a two server network with re-entrant lines. Closing comments are in Section~4.

\section{The Critical Push-Pull is Non-Stabilizable}
Note that a policy ${\cal P}$ induces a random walk on $\mathbb{Z}_+^2$ with transitions of the form $\big(\, \uprightarrow\, , \downleftarrow\, , \leftrightarrow, \updownarrow\big)$ corresponding to
\begin{equation*}
\big((\mbox{push},\mbox{push}), (\mbox{pull},\mbox{pull}), (\mbox{push},\mbox{pull}),(\mbox{pull},\mbox{push})\big).
\end{equation*}  
This is a discrete time embedded Markov chain, $\{X_n,n\ge0\}$ with transition probabilities, $p_{(x_1,x_2), (x'_1,x'_2)} = \Prob\big(X_{n+1} = (x'_1,x'_2) \,|\, X_{n} = (x_1,x_2)\big)$ defined as follows:
\begin{align*}
\label{eq:dynamics}
p_{(x_1,x_2), (x_1+1,x_2)}& = \dfrac{\lambda_1}{\lambda_1+\lambda_2}, &
p_{(x_1,x_2), (x_1,x_2+1)}& = \dfrac{\lambda_2}{\lambda_1+\lambda_2}, &
\,\,\,\mbox{if}  \quad {\cal P}\big((x_1,x_2)\big)& = (\makebox[0.8cm][c]{push},\makebox[0.8cm][c]{push}),
\\[5pt]
p_{(x_1,x_2), (x_1-1,x_2)}& = \dfrac{\mu_1}{\mu_1+\mu_2}, &
p_{(x_1,x_2), (x_1,x_2-1)}& = \dfrac{\mu_2}{\mu_1+\mu_2}, &
\,\,\,\mbox{if}  \quad {\cal P}\big((x_1,x_2)\big)& = (\makebox[0.8cm][c]{pull},\makebox[0.8cm][c]{pull}),
\\[5pt]
p_{(x_1,x_2), (x_1-1,x_2)}& = \dfrac{\mu_1}{\mu_1+\lambda_1}, &
p_{(x_1,x_2), (x_1+1,x_2)}& = \dfrac{\lambda_1}{{\mu_1+\lambda_1}}, &
\,\,\,\mbox{if}  \quad {\cal P}\big((x_1,x_2)\big)& = (\makebox[0.8cm][c]{push},\makebox[0.8cm][c]{pull}),
\\[5pt]
p_{(x_1,x_2), (x_1,x_2-1)}& = \dfrac{\mu_2}{\mu_2+\lambda_2}, &
p_{(x_1,x_2), (x_1,x_2+1)}& = \dfrac{\lambda_2}{{\mu_2+\lambda_2}}, &
\,\,\,\mbox{if}  \quad {\cal P}\big((x_1,x_2)\big)& = (\makebox[0.8cm][c]{pull},\makebox[0.8cm][c]{push}).
\end{align*}
A given class of $\{X_n\}$ is positive recurrent, if and only if the associated class in the Markov jump process is positive recurrent. This holds since all transitions rates in the chain are bounded from above by $\lambda_1+\lambda_2+\mu_1+\mu_2$ and thus the Markov jump process is non-explosive (see for example \cite{norris1997mc}, Theorem~3.5.3).
 
In the case where $\lambda_1=\mu_1=\mu_2=\lambda_2$ the process $\{X_n,n\ge0\}$ is a simple symmetric random walk on a degenerate ({\em non}-random) environment similar to the random walks studied in \cite{holmes2011random}. 

\begin{theorem}
\label{thm:push-pull-Instability}
The critical push-pull network is non-stabilizable.
\end{theorem}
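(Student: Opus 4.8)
The plan is to show that criticality forces, for \emph{every} non-idling policy ${\cal P}$, a linear martingale along the embedded chain $\{X_n=(X_n^{(1)},X_n^{(2)})\}$, and that the particular structure of this martingale rules out positive recurrence. First I would introduce
\[
M_n \;:=\; \lambda_2\,X_n^{(1)}-\lambda_1\,X_n^{(2)},
\]
and check that it is a martingale for the natural filtration under \emph{any} ${\cal P}$. This reduces to verifying that the conditional mean one-step increment of $M$ vanishes for each of the four server-action pairs at each state (including the boundary states where ${\cal P}$ is constrained). For $(\mathrm{push},\mathrm{push})$ it equals $(\lambda_1\lambda_2-\lambda_2\lambda_1)/(\lambda_1+\lambda_2)=0$; for $(\mathrm{pull},\mathrm{pull})$, after substituting $\mu_i=\lambda_i$, it equals $(\lambda_2\lambda_1-\lambda_1\lambda_2)/(\lambda_1+\lambda_2)=0$; and for the ``sideways'' actions $(\mathrm{push},\mathrm{pull})$ and $(\mathrm{pull},\mathrm{push})$, which under $\lambda_i=\mu_i$ are symmetric $\pm1$ steps in a single coordinate, $M$ changes by $\pm\lambda_2$ and $\pm\lambda_1$ respectively, with probability $\half$ each, so the drift is again $0$. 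Because the boundary conditions on ${\cal P}$ only dictate which of these four actions is used, the martingale identity holds everywhere. I would also record for later that at \emph{every} step $M$ moves by precisely one of $\pm\lambda_1,\pm\lambda_2$, so almost surely $0<\delta\le|M_{n+1}-M_n|\le C$, where $\delta:=\min(\lambda_1,\lambda_2)$ and $C:=\max(\lambda_1,\lambda_2)$.

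With this martingale available, I would argue by contradiction. Suppose some non-idling ${\cal P}$ stabilizes the network; then $\{X_n\}$ possesses a positive recurrent class. Fix a state $x_0$ in that class, run the chain from $x_0$, and let $T:=\inf\{n\ge1:X_n=x_0\}$, so that $T\ge1$, $T<\infty$ a.s., and $\Exp[T]<\infty$ by positive recurrence. Writing $N_n:=M_n-M_0$ (a martingale with $N_0=0$ and the same increments as $M$) and using orthogonality of martingale increments together with $\{T\ge k\}\in\F_{k-1}$, I would obtain, for every $n$,
\[
\Exp\big[N_{n\wedge T}^2\big]\;=\;\Exp\Big[\sum_{k=1}^{n\wedge T}(M_k-M_{k-1})^2\Big]\;\le\;C^2\,\Exp[n\wedge T]\;\le\;C^2\,\Exp[T]\;<\;\infty .
\]
Thus $\{N_{n\wedge T}\}_n$ is an $L^2$-bounded martingale; it therefore converges a.s.\ and in $L^2$, necessarily to $N_T$ (as $T<\infty$ a.s.). Passing to the limit and using monotone convergence on the right-hand side,
\[
\Exp\big[N_T^2\big]\;=\;\Exp\Big[\sum_{k=1}^{T}(M_k-M_{k-1})^2\Big]\;\ge\;\delta^2\,\Exp[T]\;\ge\;\delta^2\;>\;0 .
\]
But $X_T=x_0=X_0$ forces $N_T=M_T-M_0=0$, so the left-hand side is $0$ --- a contradiction. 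Hence the critical network is non-stabilizable.

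The four drift computations and the increment-orthogonality identity are routine; the point that needs care is the limiting step. One would like to apply an optional-stopping theorem at $T$ directly to the ``compensated-square'' martingale $M_n^2-\sum_{k\le n}(M_k-M_{k-1})^2$, but its increments grow linearly in $n$, so a bounded-increments version of optional stopping is not available. The remedy, as above, is to first establish the a priori bound $\sup_n\Exp[N_{n\wedge T}^2]\le C^2\Exp[T]$ --- which is precisely where finiteness of the mean return time enters --- and then invoke $L^2$-convergence of $L^2$-bounded martingales. Conceptually the whole argument distills to one fact: a martingale whose increments are bounded away from $0$ cannot return to its initial value with finite expected hitting time, and the criticality $\lambda_i=\mu_i$ is exactly what produces such a martingale, uniformly over all policies.
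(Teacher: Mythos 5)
Your proof is correct, and it shares the paper's central idea---a linear function of the state that is a martingale under \emph{every} non-idling policy---but it closes the argument by a genuinely different route. The paper applies the optional stopping theorem to the first-passage time from a state $\mathbf{x}$ of the putative positive recurrent class to a second state $\mathbf{y}$ of that class chosen so that $g(\mathbf{x})\neq g(\mathbf{y})$, and derives the contradiction $g(\mathbf{x})=\Exp[Z_0]=\Exp[Z_T]=g(\mathbf{y})$. You instead stop at the \emph{return} time to a single state and exploit the quadratic variation: since the martingale increments are bounded above \emph{and} bounded away from zero, orthogonality of increments gives $\delta^2\,\Exp[T]\le\Exp[N_T^2]\le C^2\,\Exp[T]$, while $N_T=0$ on the return event, which is the contradiction. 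Your $L^2$ argument is slightly longer (the paper needs only the bounded-increment optional stopping theorem, since $|Z_{n+1}-Z_n|\le\max(\lambda_1,\lambda_2)$), but it buys something real: you never need to exhibit two reachable states with distinct $g$-values, so the ``non-degeneracy'' step that the paper must argue separately (and formalizes as a hypothesis of its Theorem~3.1 and Lemma~3.2 in the generalizations) is replaced by the lower bound $|M_{n+1}-M_n|\ge\delta>0$, which you verify uniformly over actions. One further point in your favor: your martingale $M_n=\lambda_2X_n^{(1)}-\lambda_1X_n^{(2)}$ is the correctly normalized harmonic function --- a direct check of the push--push transition shows that the paper's stated $g((x_1,x_2))=\lambda_1x_1-\lambda_2x_2$ has drift $\lambda_1-\lambda_2$ under that action and so is harmonic only when $\lambda_1=\lambda_2$; the coefficients there are evidently transposed (compare the vector $\bfalpha=(\lambda_1^{-1},-\lambda_2^{-1},\dots)'$ used for the ring in Section~3, which for $M=2$ is proportional to your choice). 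Your four drift computations and the $\{T\ge k\}\in\F_{k-1}$ orthogonality step are all sound, so I see no gap.
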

\begin{proof}
Assume that there exists a policy ${\cal P}$ such that ${X_n}$ has a positive recurrent class, ${\cal B} \subset {\cal S}$. 
Define,
\[ 
g\big( (x_1,x_2) \big) := \lambda_1 x_1 - \lambda_2 x_2.
\]
Under the same probability space, define $Z_n = g(X_n)$. It is readily verified by the transition probabilities above that $\Expect[Z_{n+1} | \sigma(X_0,\ldots,X_n)] = Z_n$ and $\Expect[|Z_n|] < \infty$ hence $\{Z_n\}$ is a martingale (for any choice of ${\cal P}$).

Pick now two arbitrary states $\mathbf{x},\mathbf{y}  \in {\cal B}$ such that $g(\mathbf{x}) \neq g(\mathbf{y})$. It is obvious 
that two such states exist under the assumption of positive recurrence of ${\cal B}$ and the form of $g(\cdot)$. Set $X_0 = \mathbf{x}$ w.p.~1 and  define the stopping time $T = \inf\{n\ge 0 : X_n = \mathbf{y} \}$. Since ${\cal B}$ is assumed to be positive-recurrent, $\Expect[T]<\infty$. As can be verified with the triangle inequality, $|Z_{n+1}-Z_{n}|<1$ a.s.\ and thus, by the optional stopping theorem (see for example \cite{williams1991pm}, Section 10.10), $\Expect[Z_T] = \Expect[Z_0]$. Hence,
\begin{equation}
\label{eq:Contra}
g(\mathbf{x}) = \Expect[Z_0] = \Expect[Z_T] = g(\mathbf{y});
\end{equation}
a contradiction. Hence there cannot exist a positive recurrent class ${\cal B}$.
\end{proof}

\section{Generalizations}

The key idea of the proof of Theorem~\ref{thm:push-pull-Instability} is to find a function $g(\cdot)$ over ${\cal S}$ that is a martingale (harmonic function) for any possible policy ${\cal P}$. The fact that a linear harmonic function was successfully employed in the critical push-pull network suggests that the method can be generalized to higher dimensional queueing networks. 

In this section we first present general sufficient criteria for non-stabilizability of objects that we refer to as {\em homogeneous controlled  queueing networks}.  Then, we employ this result to show that two special cases that are structured generalizations of the critical push-pull network are non-stabilizable.

Our object of study is a controlled discrete time Markov chain $\{X_n,\,~n\ge~0\}$ with state space ${\cal S} =\mathbb{Z}_+^M$. Denote by ${\cal A}(\mathbf{z})$ the set of actions that can be performed from any state $\mathbf{z} \in {\cal S}$,. 
Let ${\cal A} =  \bigcup_{\mathbf{z} \in {\cal S}} {\cal A}(\mathbf{z}) $
be the set of all possible actions and assume that $|{\cal A}| = L < \infty$. A~deterministic policy is then a function,
$
{\cal P} : {\cal S} \rightarrow {\cal A},
$
with the restriction ${\cal P}(\mathbf{z}) \in {\cal A}(\mathbf{z})$. It is assumed that under such policies $X_n \in {\cal S}$ for all $n$. Transition probabilities depend on the selected action $a \in {\cal A}(\mathbf{z})$ and are assumed to be specified 
by $\Prob_a \big( X_{n+1}=\cdot      \,|\, X_n=\mathbf{z} \big)$.

These objects are termed {\em homogenous} since we assume that,
\[
\tilde{P}_a(\mathbf{x}) := \Prob_a \big( X_{n+1}- \mathbf{z} = \mathbf{x}     \,|\, X_n=\mathbf{z} \big),
\]
 is independent of $\mathbf{z}$. Further they resemble {\em queueing networks} due to the transition structure that we describe now. Let $\{\mathbf{e}_i\}_{i=1}^M$ denote  the canonical row vectors in $\RL^M$ and define the set of possible transitions,
\[ 
{\cal D} := \{{\mathbf x} \in {\mathbb R}^M \, : \, \tilde{P}_a(\mathbf{x}) > 0\,\, \mbox{for some}\,\, a \in {\cal A}\}.
\]
We assume elements of ${{\cal D}}$ are of one of these three forms: $\mathbf{e}_i$, $-\mathbf{e}_i$ or $\mathbf{e}_i-\mathbf{e}_j$ for $i\neq j$.  It is further useful to define 
\[
{\cal I}_1 := \{i:  \mathbf{e}_i \in {{\cal D}} \,\, \mbox{or} \,\,  -\mathbf{e}_i \in {{\cal D}}   \},
\qquad
{\cal I}_2:= \{(i,j):  \mathbf{e}_i-\mathbf{e_j} \in {{\cal D}}   \}.
\]
Elements of ${\cal I}_1$  correspond to arrivals or departures of jobs into the network. Elements of ${\cal I}_2$ correspond to movements of jobs between queues in the network.

Our goal is to establish non-stabilizability in certain parameter cases. Similar to Definition~\ref{def:nonStab} we say that these more general networks are {\em non-stabilizable} if there does not exist a policy ${\cal P}$ that induces a positive recurrent class that is reached w.p.\ $1$. Mirroring Theorem~\ref{thm:push-pull-Instability}, our key step used to prove non-stabilizability is to find
a function $g:\cal S\rightarrow \RL$ that is harmonic for all policies. The function $g$ should satisfy
\begin{equation}\label{martingale}
\Exp_a [ g(X_{n+1}) - g(X_n)  \,|\, X_n = \mathbf{z}]  = 0,\quad  \mathbf{z} \in {\cal S},\,\, 
a \in {\cal A}(\mathbf{z}).
\end{equation}
The appeal of being homogenous and having a finite action space 
$\mathcal{A}=\{a_1,\dots,a_L\}$ is that the number of these equations reduces to $L$:
\[
\Exp_{a_i} [ g(X_{n+1}) - g(X_n) \,|\, X_n]  = 0,\quad i=1,\dots,L.
\]
It is now sensible to restrict the search of harmonic functions to the class of 
linear functions, $g(\mathbf{z}) = \boldsymbol{\alpha}' \mathbf{z}$ with
$\boldsymbol{\alpha} \in \mathbb{R}^M$ and $\boldsymbol{\alpha} \neq \mathbf{0}$. In this case, proving that $g$ is harmonic reduces to finding  
$\boldsymbol{\alpha} \neq \mathbf{0}$ such that,
\begin{equation}
\label{eq:DeltaAlphaSingle}
\boldsymbol{\alpha}' \, \boldsymbol{\Delta}_i = 0,\quad i=1,\dots,L,
\end{equation}
where $\boldsymbol{\Delta}_i := \Expect_{a_i} [ X_{n+1} - X_n \,|\, X_n ]$ is the \emph{drift vector of 
action} $a_i$. We define the $L \times M$ dimensional 
{\em action drift matrix} $\,\mathbf{D}$ as having rows $\boldsymbol\Delta_i',\ i=1,\dots, L$. Therefore (\ref{eq:DeltaAlphaSingle}) becomes
\begin{equation}
\label{eq:DalphaE0}
\mathbf{D} \boldsymbol{\alpha} = \mathbf{0}. 
\end{equation}
We are now in a position to state a sufficient condition for non-stabilizability:
\begin{theorem}
\label{thm:genHCRWinstab}
Consider a homogeneous controlled queueing network $\{X_n,\,~n\ge~0\}$ with action 
drift matrix $\mathbf{D}$. Then it is non-stabilizable if
\begin{equation}
\label{eq:rankCondition}
\mathrm{rank} \big ( \mathbf{D} \big) < M,
\end{equation}
and the following non-degeneracy condition holds: The system \eqref{eq:DalphaE0} has a solution $\boldsymbol\alpha$ such that for every  ${\mathbf x} \in{\cal S}$ and every $a \in {\cal A}(\mathbf{x})$,  $\Prob_a \big(  \boldsymbol{\alpha}' X_1 \neq \boldsymbol{\alpha}'\mathbf{x} \,|\, X_0 = \mathbf{x} ) > 0$.
\end{theorem}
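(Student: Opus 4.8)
The plan is to transcribe the martingale argument of Theorem~\ref{thm:push-pull-Instability}, now reading the harmonic function directly off the null space of $\mathbf{D}$. Since $\mathbf{D}$ has $M$ columns and $\mathrm{rank}(\mathbf{D})<M$, the homogeneous system $\mathbf{D}\boldsymbol{\alpha}=\mathbf{0}$ has nonzero solutions; by the non-degeneracy hypothesis we may fix one, $\boldsymbol{\alpha}\neq\mathbf{0}$, for which additionally $\Prob_a(\boldsymbol{\alpha}'X_1\neq\boldsymbol{\alpha}'\mathbf{x}\mid X_0=\mathbf{x})>0$ for every $\mathbf{x}\in\mathcal{S}$ and every $a\in\mathcal{A}(\mathbf{x})$. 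Put $g(\mathbf{z})=\boldsymbol{\alpha}'\mathbf{z}$. By homogeneity, for each action $a_i$ one has $\mathbb{E}_{a_i}[g(X_{n+1})-g(X_n)\mid X_n]=\boldsymbol{\alpha}'\boldsymbol{\Delta}_i$, which is the $i$-th entry of $\mathbf{D}\boldsymbol{\alpha}$ and hence vanishes; so $g$ satisfies \eqref{martingale}, i.e.\ it is harmonic for every policy.

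Next, suppose for contradiction that some policy $\mathcal{P}$ induces a positive recurrent class $\mathcal{B}\subset\mathcal{S}$ that is entered with probability~$1$. With $\mathcal{F}_n=\sigma(X_0,\dots,X_n)$ and $Z_n=g(X_n)$, the harmonicity above gives $\mathbb{E}[Z_{n+1}\mid\mathcal{F}_n]=Z_n$ under $\mathcal{P}$ (indeed under any policy). Moreover every admissible displacement lies in $\mathcal{D}$, which by assumption consists only of vectors $\pm\mathbf{e}_i$ or $\mathbf{e}_i-\mathbf{e}_j$, so the increments obey $|Z_{n+1}-Z_n|=|\boldsymbol{\alpha}'(X_{n+1}-X_n)|\le C$ a.s., where $C:=\max\big(\max_i|\alpha_i|,\ \max_{i\neq j}|\alpha_i-\alpha_j|\big)<\infty$. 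In particular $\mathbb{E}|Z_n|<\infty$, so $\{Z_n\}$ is a genuine martingale.

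The one ingredient genuinely new relative to Theorem~\ref{thm:push-pull-Instability}, and the reason the non-degeneracy condition is imposed, is exhibiting two states of $\mathcal{B}$ on which $g$ differs --- this is the step where the argument could otherwise break down. Pick any $\mathbf{x}\in\mathcal{B}$ and apply the non-degeneracy condition with $a=\mathcal{P}(\mathbf{x})\in\mathcal{A}(\mathbf{x})$: there is a state $\mathbf{y}$, reachable from $\mathbf{x}$ in one step under $\mathcal{P}$, with $g(\mathbf{y})\neq g(\mathbf{x})$; since a recurrent class is closed under the dynamics, $\mathbf{y}\in\mathcal{B}$. Start the chain at $X_0=\mathbf{x}$ and set $T=\inf\{n\ge0:X_n=\mathbf{y}\}$. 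Positive recurrence of $\mathcal{B}$ yields $\mathbb{E}[T]<\infty$, and combined with the uniformly bounded increments the optional stopping theorem (exactly as in the proof of Theorem~\ref{thm:push-pull-Instability}) gives $g(\mathbf{x})=\mathbb{E}[Z_0]=\mathbb{E}[Z_T]=g(\mathbf{y})$, contradicting $g(\mathbf{x})\neq g(\mathbf{y})$. Hence no such $\mathcal{B}$ exists, and the network is non-stabilizable.

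The remaining points need only routine care: that within a positive recurrent class every hitting expectation $\mathbb{E}_{\mathbf{x}}[\tau_{\mathbf{y}}]$ is finite (standard, via the stationary law on $\mathcal{B}$, and already used implicitly in Theorem~\ref{thm:push-pull-Instability}), and that a recurrent class is absorbing, so that the one-step move produced by the non-degeneracy hypothesis truly lands back in $\mathcal{B}$. The structural work is done by the rank condition, which manufactures a nontrivial null vector $\boldsymbol{\alpha}$; the non-degeneracy condition is precisely what excludes the degenerate scenario in which $\mathcal{B}$ is contained in a single level set of $g$, where the martingale would carry no information and the contradiction would collapse.
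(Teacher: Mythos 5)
Your proposal is correct and follows essentially the same route as the paper's own proof: the rank condition yields a nontrivial $\boldsymbol{\alpha}$ and hence a linear harmonic function, the form of $\mathcal{D}$ bounds the martingale increments, and the non-degeneracy condition supplies two states of the putative positive recurrent class with distinct $g$-values, so optional stopping gives the contradiction. You merely spell out a few details (the explicit increment bound $C$ and the closedness of the recurrent class) that the paper leaves implicit.
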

\begin{proof}
The proof follows the exact same lines as the proof of Theorem~\ref{thm:push-pull-Instability}. The rank condition implies that (\ref{eq:DalphaE0}) has a non-trivial solution $\boldsymbol{\alpha}$, so there exists a non-trivial linear function $g(\cdot)$ that is harmonic for all policies.  The non-degeneracy condition implies that for every ${\mathbf x} \in{\cal S}$ there exists 
a state ${\mathbf y} \in{\cal S}$ which is reachable from $\mathbf{x}$ and is such that $g(\mathbf{x}) \neq g(\mathbf{y})$. Hence, if it is assumed that there exists a positive recurrent class ${\cal B}$, then
a contradiction as in (\ref{eq:Contra}) can be reached. The additional condition needed for the optional stopping theorem requiring that $|Z_{n+1}-Z_n|$ be bounded is guaranteed by the form of~${\cal D}$.
\end{proof}

Observe that the technical non-degeneracy condition actually implies \eqref{eq:rankCondition}, yet we view the rank condition \eqref{eq:rankCondition} as the central pillar for establishing non-stabilizability. The  lemma below puts forward sufficient conditions for satisfying the non-degeneracy condition:
\begin{lemma}
\label{lemma:non-deg-1}
Consider a homogeneous controlled queueing network for which there exists an $\boldsymbol{\alpha} \neq {\mathbf 0}$ solving \eqref{eq:DalphaE0}. If for every $i \in {\cal I}_1$, $\alpha_i \neq 0$ and for every $(i,j) \in {\cal I}_2$, $\alpha_i \neq \alpha_j$ then the non-degeneracy condition holds.
\end{lemma}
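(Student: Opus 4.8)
The plan is to verify the non-degeneracy condition of Theorem~\ref{thm:genHCRWinstab} directly, using the \emph{same} $\boldsymbol{\alpha}$ that the hypothesis supplies as a solution of \eqref{eq:DalphaE0}. First I would establish the key pointwise fact: $\boldsymbol{\alpha}'\mathbf{d}\neq 0$ for every admissible transition vector $\mathbf{d}\in\mathcal{D}$. Because the elements of $\mathcal{D}$ come in only three shapes, this is a short case analysis: if $\mathbf{d}=\mathbf{e}_i$ or $\mathbf{d}=-\mathbf{e}_i$ then $i\in\mathcal{I}_1$, so $\boldsymbol{\alpha}'\mathbf{d}=\pm\alpha_i\neq 0$ by the first hypothesis; if $\mathbf{d}=\mathbf{e}_i-\mathbf{e}_j$ with $i\neq j$ then $(i,j)\in\mathcal{I}_2$, so $\boldsymbol{\alpha}'\mathbf{d}=\alpha_i-\alpha_j\neq 0$ by the second hypothesis.

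Next I would fix an arbitrary $\mathbf{x}\in\mathcal{S}$ and an arbitrary $a\in\mathcal{A}(\mathbf{x})$ and exhibit one transition that changes the value of $g(\mathbf z)=\boldsymbol{\alpha}'\mathbf z$. Since $\tilde{P}_a(\cdot)$ is a probability distribution whose support is contained in $\mathcal{D}$, and since $\mathbf{0}\notin\mathcal{D}$ (all three permitted forms are nonzero), there must be some $\mathbf{d}\in\mathcal{D}$ with $\tilde{P}_a(\mathbf{d})>0$. By homogeneity, $\Prob_a(X_1-\mathbf{x}=\mathbf{d}\mid X_0=\mathbf{x})=\tilde{P}_a(\mathbf{d})>0$, and on this event $\boldsymbol{\alpha}'X_1-\boldsymbol{\alpha}'\mathbf{x}=\boldsymbol{\alpha}'\mathbf{d}\neq 0$ by the first step. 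Hence
\[
\Prob_a\big(\boldsymbol{\alpha}'X_1\neq\boldsymbol{\alpha}'\mathbf{x}\mid X_0=\mathbf{x}\big)\ \ge\ \tilde{P}_a(\mathbf{d})\ >\ 0,
\]
which is precisely the non-degeneracy condition, and the lemma follows.

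I do not anticipate a real obstacle; the whole content is in matching the three cases of the first step to the definitions of $\mathcal{I}_1$ and $\mathcal{I}_2$, and in the observation that every available action necessarily moves the chain, because $\mathcal{D}$ contains no zero vector. The only point worth a sentence of care is the boundary of $\mathcal{S}$: at a state with a vanishing coordinate the admissible action set $\mathcal{A}(\mathbf{x})$ is implicitly restricted so that no positive-probability transition leaves $\mathcal{S}$, but this only deletes some directions $\mathbf{d}$ from consideration and never empties the support, so the argument of the second step goes through verbatim.
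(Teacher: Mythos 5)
Your proposal is correct and follows essentially the same route as the paper's proof: a case analysis over the three permitted forms of transition vectors in $\mathcal{D}$, matched to the definitions of $\mathcal{I}_1$ and $\mathcal{I}_2$, showing every possible step changes $\boldsymbol{\alpha}'X_n$. The only (immaterial) difference is that the paper concludes $\boldsymbol{\alpha}'X_1\neq\boldsymbol{\alpha}'X_0$ almost surely, while you stop at the positive-probability statement that the non-degeneracy condition literally requires.
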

\begin{proof}
For every initial condition $X_0$ and every action in ${\cal A}(X_0)$, almost surely one of these two events takes place: (1) A single coordinate, $i^* \in {\cal I}_1$ changes. (2) Two coordinates, denoted $(i^*,j^*) \in {\cal I}_2$ change.
If (1) occurs then $|\boldsymbol\alpha' (X_1 - X_0)| = |\alpha_{i^*}| \neq 0$.
If (2) occurs then $\boldsymbol\alpha' (X_1 - X_0) = \alpha_{i^*} - \alpha_{j^*}  \neq 0$.
So in any case,   $\boldsymbol\alpha'X_1 \neq \boldsymbol\alpha' X_0$ a.s.
\end{proof}

\subsection{A Push-Pull Ring with an Even Number of Servers}
\begin{figure}[ht]
\centering
\includegraphics[width=2.8in]{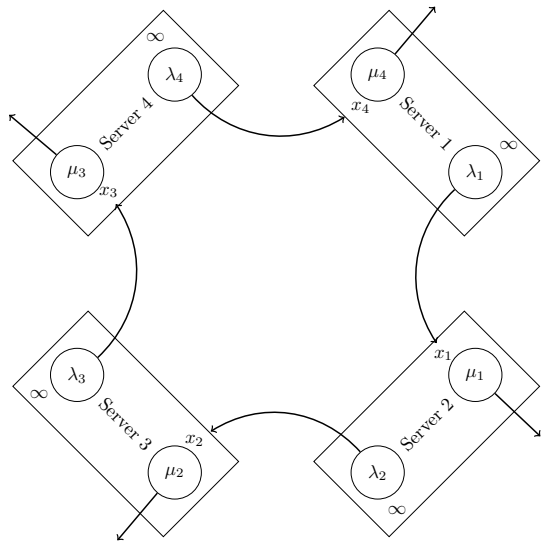}
\caption{A Ring of $M=4$ Push-Pull servers.}
\label{fig.ring}
\end{figure}
One way of generalizing the push-pull network is to allow more servers as in Figure~\ref{fig.ring}. In this network there are $M\ge2$ servers with the same number of streams. Each stream has a push operation at rate $\lambda_i$ and a pull operation at rate $\mu_i$. Our index notation implies that server $i$ has a choice between {\em push} to stream $i$ (at rate $\lambda_i$) or {\em pull} from stream $i-1$ (at rate $\mu_{i-1}$). All index arithmetics are modulo $M$ on $\{1,\ldots,M\}$.

Similar to the push-pull network (the special case with $M=2$), a non-idling policy is a mapping
$
{\cal P} : {\cal S} \rightarrow \{\mbox{push},\mbox{pull}\}^M,
$
with the restriction that the pull operation $i$ (on server $i+1$) can only be performed if $x_i>0$.

Stability properties of this model under a ``pull-priority'' policy (and general processing times) were investigated in \cite{GuoLefeberNazarathyWeissZhang11}, Section~5. In there it is shown that the network is stable if $\lambda_i<\mu_i,\, i=1,\ldots,M$ as well as if $M$ is odd and $\lambda_i>\mu_i, \, i=1,\ldots, M$, yet the ratio $\lambda_i/\mu_i$ is ``not too large'' (see \cite{GuoLefeberNazarathyWeissZhang11} for the full details). We further believe that for odd $M$, the network is stabilizable 
in the critical case ($\lambda_i=\mu_i$ for all $i$)
by means of a pull-priority policy. This has not been established in \cite{GuoLefeberNazarathyWeissZhang11}, yet stems from the same intuition appearing in \cite{GuoLefeberNazarathyWeissZhang11} based on the concept of a {\em mode}.

An interesting aspect of push-pull network rings is that \emph{even rings} (network rings with an even number of servers $M$) are drastically different to \emph{odd rings}. For example, the pull-priority policy cannot be stablilized for even rings in which $\lambda_i > \mu_i$ for all $i$: To see this, assume the state of the system is,
\[
\big( x_1, x_2,\ldots,x_n \big) = \big(+,0,+,0,\ldots,+,0\big),
\]
where `$+$' indicates a strictly positive quantity. Then, the selected action under pull-priority is
\[
\big(\mbox{push},\mbox{pull},
\mbox{push},\mbox{pull},
\ldots,
\mbox{push},\mbox{pull}
\big).
\]
In this case there is a strictly positive probability that the `+' queues will grow without bound. The drastic differences between even and odd rings go beyond the pull-priority policy of \cite{GuoLefeberNazarathyWeissZhang11}. We now show:
\begin{theorem}
The critical push-pull ring with $M$ even is non-stabilizable.
\end{theorem}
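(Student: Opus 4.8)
The plan is to verify the hypotheses of Theorem~\ref{thm:genHCRWinstab} for the critical even ring, using Lemma~\ref{lemma:non-deg-1} to discharge the non-degeneracy condition. First I would record the network as a homogeneous controlled queueing network on $\mathcal S=\mathbb Z_+^M$: server $k$ either pushes into stream $k$ (transition $+\mathbf e_k$, rate $\lambda_k$) or pulls from stream $k-1$ (transition $-\mathbf e_{k-1}$, rate $\mu_{k-1}$), all arithmetic modulo $M$, and since a pulled job leaves the system there are no transitions of the form $\mathbf e_i-\mathbf e_j$. Thus $\mathcal I_1=\{1,\dots,M\}$ and $\mathcal I_2=\emptyset$, and positive recurrence of the embedded chain is equivalent to that of the jump process because all rates are bounded by $\sum_i(\lambda_i+\mu_i)$ (as in Section~2). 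By Lemma~\ref{lemma:non-deg-1} it therefore suffices to produce a nonzero $\boldsymbol\alpha$ solving \eqref{eq:DalphaE0} with $\alpha_i\neq 0$ for every $i$; this will give $\mathrm{rank}(\mathbf D)<M$ and the non-degeneracy condition, and Theorem~\ref{thm:genHCRWinstab} finishes the argument.

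Next I would write down the action drift matrix. An action is $a=(a_1,\dots,a_M)\in\{\mathrm{push},\mathrm{pull}\}^M$ (on the boundary only those not pulling from an empty stream are feasible, but interior states make every such $a$ feasible, so this is all of $\mathcal A$). In the critical case $\lambda_k=\mu_k$ the drift $\boldsymbol\Delta_a$ equals, up to a positive normalising factor (the reciprocal of the total transition rate under $a$),
\[
\sum_{k:\,a_k=\mathrm{push}}\lambda_k\mathbf e_k-\sum_{k:\,a_k=\mathrm{pull}}\lambda_{k-1}\mathbf e_{k-1}
=\sum_{j=1}^M\lambda_j\big(\mathbf 1\{a_j=\mathrm{push}\}-\mathbf 1\{a_{j+1}=\mathrm{pull}\}\big)\mathbf e_j .
\]
Setting $\beta_j:=\alpha_j\lambda_j$ and using $\mathbf 1\{a_{j+1}=\mathrm{pull}\}=1-\mathbf 1\{a_{j+1}=\mathrm{push}\}$, a reindexing collapses the requirement $\boldsymbol\alpha'\boldsymbol\Delta_a=0$ for all $a$ into
\[
\sum_{k=1}^M(\beta_k+\beta_{k-1})\,\mathbf 1\{a_k=\mathrm{push}\}=\sum_{j=1}^M\beta_j\qquad\text{for all }a ,
\]
which, testing $a\equiv\mathrm{pull}$ and then the single-push actions, is equivalent to $\beta_k+\beta_{k-1}=0$ for every $k$ together with $\sum_{j=1}^M\beta_j=0$.

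Finally I would solve this system. The recursion $\beta_k=-\beta_{k-1}$ gives $\beta_k=(-1)^{k-1}\beta_1$, and wrapping around the ring forces $(-1)^M\beta_1=\beta_1$, which admits a nonzero $\beta_1$ \emph{precisely when $M$ is even}; moreover for $M$ even the alternating vector $\beta_k=(-1)^{k-1}$ automatically satisfies $\sum_{j=1}^M\beta_j=0$. Hence $\alpha_j=(-1)^{j-1}/\lambda_j$ is a nonzero solution of \eqref{eq:DalphaE0} with all coordinates nonzero, so $\mathbf D$ has rank $<M$, Lemma~\ref{lemma:non-deg-1} yields the non-degeneracy condition, and Theorem~\ref{thm:genHCRWinstab} proves non-stabilizability. (For $M=2$ this recovers a linear martingale of the type used in the proof of Theorem~\ref{thm:push-pull-Instability}.)

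The computations are routine; the genuine content — and the single place where the hypothesis ``$M$ even'' enters — is the cyclic-consistency step, since the alternating-sign harmonic vector closes up around the ring if and only if $M$ is even, which is exactly why no such linear martingale exists for odd rings. The one thing to handle with care is the index shift ``server $k$ pulls from stream $k-1$'' combined with the critical identification $\mu_k=\lambda_k$, as this is what turns the drift condition into the clean recursion $\beta_k+\beta_{k-1}=0$.
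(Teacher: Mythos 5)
Your proof is correct, and it follows the paper's overall architecture (invoke Theorem~\ref{thm:genHCRWinstab} together with Lemma~\ref{lemma:non-deg-1}, with the alternating vector $\alpha_j=(-1)^{j-1}\lambda_j^{-1}$ as the harmonic direction), but the way you verify that this vector lies in $\ker\mathbf{D}$ is genuinely different from the paper's. The paper proves its Lemma~\ref{lemma:ringRankD} combinatorially: it passes to the sign matrix $\hat{\mathbf{D}}$, tabulates how entry $j$ of a row constrains entry $j+1$, extracts parity properties of the runs of zeros between nonzero entries, and pairs consecutive nonzero entries to show $\hat{\boldsymbol\Delta}'\hat{\bfalpha}=0$; it separately establishes $\mathrm{rank}(\hat{\mathbf D})\ge M-1$ via the rows $\mathbf e_i+\mathbf e_{i+1}$. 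You instead write the unnormalized drift of a generic action $a$ in closed form, substitute $\mathbf 1\{a_{j+1}=\mathrm{pull}\}=1-\mathbf 1\{a_{j+1}=\mathrm{push}\}$, and reduce the entire system $\mathbf D\boldsymbol\alpha=\mathbf 0$ to the ring recursion $\beta_k+\beta_{k-1}=0$ with $\beta_k=\alpha_k\lambda_k$ (testing the all-pull and single-push actions for necessity). This is shorter and arguably more transparent: it isolates exactly where the parity of $M$ enters (the cyclic closure $(-1)^M\beta_1=\beta_1$), and as a byproduct it characterizes the full kernel of $\mathbf D$, recovering the rank dichotomy of Lemma~\ref{lemma:ringRankD} in both parities without the separate lower-bound argument. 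What the paper's route buys is a sign-pattern analysis that is agnostic to the exact algebraic form of the drift (it works row by row on $\hat{\mathbf D}$), whereas yours exploits the clean product structure of the action set; for this model your reduction is the more economical one. All the supporting checks in your write-up ($\mathcal A=\{\mbox{push},\mbox{pull}\}^M$ from interior states, $\mathcal I_2=\emptyset$, all $\alpha_i\neq0$ so Lemma~\ref{lemma:non-deg-1} applies, and the irrelevance of the positive normalizing factor $1/r_a$) are sound.
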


\begin{proof}
The push-pull ring, described as a homogeneous controlled random walk, has action drift matrix $\mathbf{D}$ of dimension $2^M \times M$, ${\cal I}_1 = \{1,\ldots,M\}$ and ${\cal I}_2 = \emptyset$.  We apply Theorem~\ref{thm:genHCRWinstab} and Lemma~\ref{lemma:non-deg-1}. The rank and non-degeneracy conditions are ensured by the $\boldsymbol\alpha$ found in Lemma~\ref{lemma:ringRankD} below.
\end{proof}

\begin{lemma}
In the critical case
\label{lemma:ringRankD}
\[
\mathrm{rank} \big( \mathbf{ D} \big) = 
\left\{
\begin{array}{cl}
M & \mbox{if}\quad       M \mbox{ is}\,\, \mbox{odd}, \\
M-1 & \mbox{if}\quad     M \mbox{ is}\,\, \mbox{even}. \\
\end{array}
\right.
\]
Further, when $M$ is even a solution of \eqref{eq:DalphaE0} is
  \begin{equation*}
  \bfalpha=\begin{pmatrix}
    \makebox[1.25cm][c]{$+\lambda_1^{-1}$}
   &\makebox[1.25cm][c]{$-\lambda_2^{-1}$}
   &\makebox[1.25cm][c]{$+\lambda_3^{-1}$}
   &\makebox[1.25cm][c]{$-\lambda_4^{-1}$}
   &\makebox[1.25cm][c]{$\dots$}
   &\makebox[1.5cm][c]{$+\lambda_{M-1}^{-1}$}
   &\makebox[1.25cm][c]{$-\lambda_{M}^{-1}$}
   \end{pmatrix}'.
 \end{equation*}
\end{lemma}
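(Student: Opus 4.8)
The plan is to write out the action drift matrix $\mathbf{D}$ explicitly, recognise a circulant structure that appears precisely in the critical case, and then read off both the rank and the kernel from the eigenvalues of that circulant. Index an action by the subset $S\subseteq\{1,\dots,M\}$ of servers that push (the complementary servers pull). Server $i$ when pushing produces a job in queue $i$ (increment $+\mathbf{e}_i$) at rate $\lambda_i$, while server $i$ when pulling removes a job from queue $i-1$ (increment $-\mathbf{e}_{i-1}$) at rate $\mu_{i-1}$; all rates are strictly positive, so the row of $\mathbf{D}$ associated with $S$ is a positive multiple of the unnormalised drift $w_S:=\sum_{i\in S}\lambda_i\mathbf{e}_i-\sum_{i\notin S}\mu_{i-1}\mathbf{e}_{i-1}$. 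Consequently $\mathrm{rank}(\mathbf{D})=\dim\mathrm{span}\{w_S:S\subseteq\{1,\dots,M\}\}$, and $\mathbf{D}\boldsymbol{\alpha}=\mathbf{0}$ if and only if $\boldsymbol{\alpha}'w_S=0$ for every $S$.

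Next I would impose criticality, $\lambda_i=\mu_i$, and re-index the second sum ($j=i-1$, indices mod $M$) to obtain the coordinatewise formula $(w_S)_j=\lambda_j\big(\chi_S(j)+\chi_S(j+1)-1\big)$, where $\chi_S\in\{0,1\}^M$ is the indicator of $S$. In matrix form this reads $w_S=\Lambda\,(C\chi_S-\mathbf{1})$, with $\Lambda=\mathrm{diag}(\lambda_1,\dots,\lambda_M)$ and $C=I+P$, where $P$ is the cyclic shift $(Px)_j=x_{j+1}$. Taking $S=\{1,\dots,M\}$ gives $w_S=\Lambda\mathbf{1}$, so $\mathbf{1}\in\mathrm{span}\{C\chi_S-\mathbf{1}\}$; since the vectors $\chi_S$ span $\RL^M$ and $\mathbf{1}=\tfrac12 C\mathbf{1}$ lies in the column space of $C$, it follows that $\mathrm{span}\{w_S\}=\Lambda\cdot(\text{column space of }C)$, and hence $\mathrm{rank}(\mathbf{D})=\mathrm{rank}(C)$.

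The matrix $C$ is circulant with eigenvalues $1+\omega^k$, $\omega=e^{2\pi i/M}$, $k=0,\dots,M-1$; such an eigenvalue vanishes exactly when $\omega^k=-1$, which is possible iff $M$ is even, and then only for $k=M/2$. Thus $\mathrm{rank}(C)=M$ for odd $M$ and $M-1$ for even $M$, which is the rank claim. For even $M$, $\mathbf{D}\boldsymbol{\alpha}=\mathbf{0}$ is equivalent to $\Lambda\boldsymbol{\alpha}$ being orthogonal to the column space of $C$, i.e.\ $C'(\Lambda\boldsymbol{\alpha})=\mathbf{0}$; the alternating vector $\mathbf{z}=(1,-1,1,\dots,-1)'$ satisfies $C'\mathbf{z}=\mathbf{0}$ --- it ``closes up'' around the ring precisely because $M$ is even --- and also $\mathbf{1}'\mathbf{z}=0$, so $\boldsymbol{\alpha}=\Lambda^{-1}\mathbf{z}=(\lambda_1^{-1},-\lambda_2^{-1},\dots,-\lambda_M^{-1})'$ solves \eqref{eq:DalphaE0}. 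Equivalently, one checks $\boldsymbol{\alpha}'w_S=(C'\Lambda\boldsymbol{\alpha})'\chi_S-(\Lambda\boldsymbol{\alpha})'\mathbf{1}=0$ directly from $C'\mathbf{z}=\mathbf{0}$ and $\mathbf{1}'\mathbf{z}=0$.

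I expect essentially all of the difficulty to be bookkeeping: correctly transcribing the push/pull transition structure into the vectors $w_S$ and spotting the circulant $C=I+P$; once $w_S=\Lambda(C\chi_S-\mathbf{1})$ is established, the remainder is classical linear algebra on $I+P$. The two points that require care are the reduction $\mathrm{span}\{w_S\}=\Lambda\cdot(\text{column space of }C)$, which genuinely uses both that $\{\chi_S\}$ spans $\RL^M$ and that $\mathbf{1}$ lies in the column space of $C$ (otherwise one would only obtain that column space plus $\RL\mathbf{1}$), and the index arithmetic modulo $M$, which is exactly the mechanism that separates the even and odd cases.
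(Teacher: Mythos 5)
Your proof is correct, and it reaches the lemma by a genuinely different route from the paper. You package each row of $\mathbf{D}$ (up to a positive normalisation) as $w_S=\Lambda(C\chi_S-\mathbf{1})$ with $C=I+P$ a circulant, observe that $\mathrm{span}\{w_S\}=\Lambda\cdot\mathrm{col}(C)$ (the step where you check $\mathbf{1}=\tfrac12 C\mathbf{1}\in\mathrm{col}(C)$ is indeed the one place this could have gone wrong, and you handle it), and then read the rank off the eigenvalues $1+\omega^k$ and the kernel off $C'\mathbf{z}=\mathbf{0}$ for the alternating vector $\mathbf{z}$. The paper instead works combinatorially on the sign matrix $\hat{\mathbf{D}}=\mathrm{sign}(\mathbf{D})$: it proves a parity property of the sign patterns in each row (an odd number of zeros separates entries of opposite sign, an even number separates entries of equal sign), gets the lower bound $\mathrm{rank}\ge M-1$ from the explicit rows $\mathbf{e}_i+\mathbf{e}_{i+1}$ forming a triangular submatrix, exhibits the rows $\mathbf{e}_i$ directly in the odd case, and verifies $\hat{\mathbf{D}}\hat{\boldsymbol{\alpha}}=\mathbf{0}$ by pairing consecutive non-zero entries. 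Your circulant argument is more conceptual and more unified: it treats odd and even $M$, and the rank and kernel statements, in one stroke, and it isolates exactly where the parity dichotomy comes from ($-1$ is an $M$-th root of unity iff $M$ is even). The paper's argument is more elementary (no complex eigenvalues) and makes the sign structure of the drift rows explicit, which is what its surrounding discussion of pull-priority dynamics also uses. One small caution for a write-up: the paper's first table indexes entry $j$ by the actions of servers $j-1$ and $j$, whereas your formula $(w_S)_j=\lambda_j(\chi_S(j)+\chi_S(j+1)-1)$ ties entry $j$ to servers $j$ and $j+1$, which is what the paper's stated convention (server $i$ pushes to stream $i$, pulls from stream $i-1$) actually gives; this relabelling is harmless for the rank and kernel, but worth flagging so the two presentations can be reconciled.
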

\begin{proof}

Define $\beta_j :=\lambda_j-\mu_j$, $j=1,\ldots,M$ and for a given action $i \in \{1,\ldots,2^M\}$ let $r_i$ be the sum of rates associated with the action. Then $r_i\,\boldsymbol\Delta_i$ has entries that depend on the action $i$ as follows:
\begin{center}
\begin{tabular}{|cc|cc|}
\hline
\multicolumn{2}{|c|}{Operation on:}&&\\
Server $j-1$        &   Server $j$      &&       Entry $j$ \\
\hline
push    &   push    &&   $\lambda_j$ \\
push    &   pull    &&   $\beta_j$ \\
pull    &   push    &&   $0$ \\
pull    &   pull    &&   $-\mu_j$ \\
\hline
\end{tabular}
\end{center}
This, in turn, implies the following relation between consecutive entries:
\begin{center}
\begin{tabular}{|r@{}clr@{}l|}
\hline
E&ntry $j$ && Entry&\ $j+1$\\
\hline
&$\lambda_j$   & $\Rightarrow$ 
 & \makebox[1.25cm][c]{$\lambda_{j+1}$}  o&r \makebox[1.25cm][c]{$\beta_{j+1}$} \\
&$-\mu_j$    & $\Rightarrow$   
 & \makebox[1.25cm][c]{$-\mu_{j+1}$}     o&r \makebox[1.25cm][c]{$0$} \\
&$0$           & $\Rightarrow$ 
 & \makebox[1.25cm][c]{$\beta_{j+1}$}    o&r \makebox[1.25cm][c]{$\lambda_{j+1}$} \\
&$\beta_j$   &  $\Rightarrow$  
 & \makebox[1.25cm][c]{$0$}              o&r \makebox[1.25cm][c]{$-\mu_{j+1}$} \\
\hline
\end{tabular}
\end{center}
Define the matrix,
$\hat{\mathbf{D}} = \mbox{sign} \big( \mathbf{D} \big)$,
where the function $\mbox{sign}(\cdot)$ is taken element-wise. In the critical case ($\beta_i = 0$, $i=1,\ldots,M$) it is evident that $\mathrm{rank}(\hat{\mathbf{D}}) = \mathrm{rank}(\mathbf{D})$ since $\mathbf{D}_{i,j} = \hat{\mathbf{D}}_{i,j} \lambda_j/r_j $. Further, by considering the structure of consecutive entries in the table above, it is evident that in each row of $\hat{\mathbf{D}}$:
\begin{enumerate}[(i)]
 \item\label{p4} The number of $0$'s separating two non-zero entries with the {\bf opposite sign} is \textbf{odd}.
 \item\label{p3} The number of $0$'s separating two non-zero entries with the {\bf same sign} is  \textbf{even}.
 \end{enumerate}
A consequence of (i) and (ii) is that the number of zero entries in each row of $\hat{\mathbf{D}}$ is even since there is an even number of 0-sequences that have an odd number of zeros. Hence in an odd/even ring there is an odd/even number of non-zero entries.

{\em $M$ odd case:}\\
Observe each of the vectors in $\{\mathbf{e}_i: i=1,\ldots,M\}$
is also a row of $\hat{\mathbf{D}}$ and therefore it has full rank. 

{\em $M$ even case:}\\
First, we prove that
 $\mathrm{rank}(\hat{\mathbf{D}})\ge M-1$.  Second, we show $\hat{{\mathbf D}} \hat{\boldsymbol{\alpha}} = {\mathbf 0}$, where,
  \begin{equation*}
  \hat{\bfalpha}=\begin{pmatrix}
    \makebox[1.25cm][c]{+1}
   &\makebox[1.25cm][c]{-1}
   &\makebox[1.25cm][c]{+1}
   &\makebox[1.25cm][c]{-1}
   &\makebox[1.25cm][c]{$\dots$}
   &\makebox[1.5cm][c]{+1}
   &\makebox[1.25cm][c]{-1}
   \end{pmatrix}',
 \end{equation*}
 this immediately implies that ${\mathbf D} \boldsymbol\alpha = {\mathbf 0}$.
 
 Consider the vectors $\mathbf{f}_i=\mathbf{e}_i+\mathbf{e}_{i+1}$,
 $i=1,\dots,M-1$, observing these are rows of $\hat{\mathbf{D}}$. Next, define $\mathbf{B}$ to be the matrix of size $(M-1)\times M$ with $i$'th row $\mathbf{f}_i$. Let $\mathbf{B}_{-1}$ be a square matrix of size $(M-1)\times(M-1)$
 obtained by deleting the first column of $\mathbf{B}$.  Clearly,  $\mathbf{B}_{-1}$ has full rank
 (the determinant of a lower triangular matrix is the product of its diagonal elements). Hence 
 $\mathrm{rank}(\hat{\mathbf{D}})\ge M-1$. 

To show $\mathrm{rank}(\hat{\mathbf{D}})<M$ we show that $\hat{\boldsymbol\Delta}^{'}\hat{\bfalpha}=0$ for any row $\hat{\boldsymbol\Delta} = (\delta_1,\ldots,\delta_M)'$ of the matrix $\hat{\mathbf{D}}$.

 Let $i_k$ be the index of the $k$-th non-zero entry of $\hat{\boldsymbol\Delta}$ starting from index $1$, and $m$
 its total number of non-zero entries.  Since $m$ is even, then we can write
 \begin{equation*}
  \hat{\boldsymbol\Delta}^{'}\hat{\bfalpha}=\sum_{k=1}^m \delta_{i_k}\hat{\alpha}_{i_k}=
  \sum_{k=1}^{m/2}\big(\delta_{i_{2k-1}}\hat{\alpha}_{i_{2k-1}}+\delta_{i_{2k}}\hat{\alpha}_{i_{2k}}\big)
 \end{equation*}
 Define $\ell_k:=i_{2k}-i_{2k-1}-1$ the number of zeros between the 
 non zero consecutive entries $\delta_{i_{2k-1}}$ and $\delta_{i_{2k}}$. By the definition
 of $\widehat{\boldsymbol\Delta}$ and $\hat{\bfalpha}$ there are only two possibilities
 \begin{align*}
  \ell_k=\left\{
 \begin{array}{ccrlcrl}
  \text{odd}&\Longrightarrow&\delta_{i_{2k-1}}=
   &\makebox[0.75cm][c]{$-\delta_{i_{2k}}$}&\text{and}&\hat{\alpha}_{i_{2k-1}}=
   &\makebox[0.75cm][c]{$\hat{\alpha}_{i_{2k}}$,}\\[.5cm]
  \text{even}&\Longrightarrow&\delta_{i_{2k-1}}=
   &\makebox[0.75cm][c]{$\delta_{i_{2k}}$}&\text{and}&\hat{\alpha}_{i_{2k-1}}=
   &\makebox[0.75cm][c]{$-\hat{\alpha}_{i_{2k}}$.}
 \end{array}\right.
 \end{align*}
 This implies that
 \begin{equation*}
  \delta_{i_{2k-1}}\hat{\alpha}_{i_{2k-1}}+\delta_{i_{2k}}\hat{\alpha}_{i_{2k}}=0,\qquad k=1,\dots,m/2,
 \end{equation*}
 proving that $\hat{\bfalpha}$ is a solution of $\hat{\mathbf{D}}\hat{\bfalpha}=\mathbf{0}$.
 \end{proof}

\begin{figure}[ht]
\centering
\includegraphics[width=2.8in]{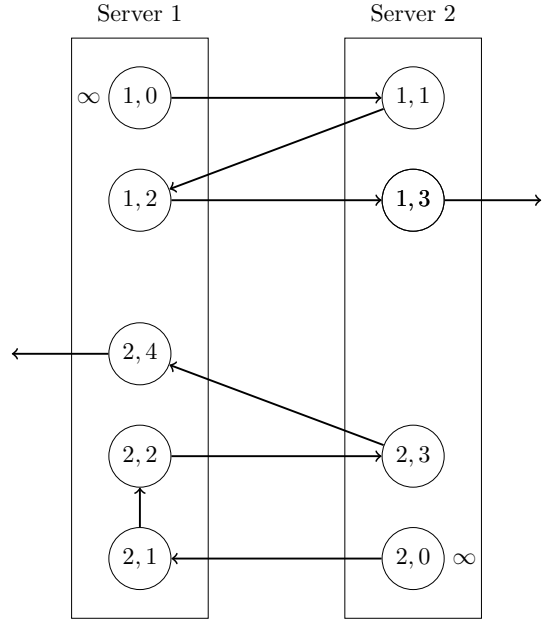}
\caption{A network of re-entrant lines on two servers.  In this example $S=2$, $n_1=3$ and $n_2 =4$. Hence the number of buffers is $M=7$. Further the number of actions that a control policy may choose is $L=20$.}
\label{fig.rlinePushPull}
\end{figure}
\subsection{Re-entrant Lines on Two Servers}

We now consider a network as that appearing in Figure~\ref{fig.rlinePushPull} essentially generalizing the push-pull by allowing streams of more than two steps that may re-enter servers multiple times. Denote the number of streams by $S \ge 1$.  Each stream $i \in \{1,\ldots,S\}$, has $n_i+1$ operations labeled by $(i,0),\ldots,(i,n_i)$, where the operation $(i,0)$ is associated with an infinite supply of materials and the other operations are associated with queues. Assume $n_i\ge 1$.  Hence the number of queues in the system is,
\[
M= \sum_{i=1}^S n_i,
\]
and the total number of operations is $M+S$. As in the push-pull network there are two servers, labeled $1$ and $2$. Each operation $(i,j)$ is performed by a unique server denoted $\sigma(i,j) \in \{1,2\}$. A control policy is then a rule based on the number of jobs in each of the queues, indicating for each server what operation it needs to work on.

The processing rate of operation $(i,j)$ is denoted by $\mu_{i,j}>0$. 
It is useful to denote, for each stream, $i \in \{1,\ldots,S\}$, the set of operations on server $\ell \in \{1,2\}$ by,
\[
C_\ell(i) := \big\{ j\in \{0,\ldots,n_i\} \,\, :\, \sigma(i,j) = \ell \big\}.
\] Motivated by \cite{GuoLefeberNazarathyWeissZhang11}, Section~4, we say that the network is {\em critical} if:
\begin{equation}
\label{eq:rlineCrit}
\sum_{j \in C_1(i)} \mu_{i,j}^{-1} = \sum_{j \in C_2(i)} \mu_{i,j}^{-1}, \quad i=1,\ldots,S.
\end{equation}
This means that for any job, the mean processing duration of service required by the job on each of the servers is equal.

The  case of $S=1$ is typically called the re-entrant line (with infinite supply). It was analyzed in \cite{GuoLefeberNazarathyWeissZhang11}, Section~3 under a last buffer first serve policy. The case of $S=2$  is a direct generalization of the push-pull network. It was analyzed in \cite{GuoLefeberNazarathyWeissZhang11}, Section~4 under a specific priority policy.

For notational purpuses, it is useful to order the $M$ queues using some arbitrary permutation
of the set $\{1,\ldots,M\}$. Denote $\bar{k}(i,j)$ to be the queue served by operation $(i,j)$, $i \in \{1,\ldots,S\}$, $j \in \{1,\ldots,n_i\}$ (note that operations $(i,0)$ do not have an associated queue). Further denote $\bar{i}(k)$ and $\bar{j}(k)$ as the inverse mappings, i.e., 
$\overline{k}\big(\, \overline{i}(k), \overline{j}(k) \big) = k.$

Similar to the push-pull ring of the previous sub-section, it is straightforward to model this network as a controlled random walk.
In doing so, it is useful to partition the set ${\cal I}_1$ into ${\cal I}_{1,+}$, the queues fed by {\em push} operations at start of each stream and ${\cal I}_{1,-}$, the queues drained by {\em pull} operations at the end of each stream.
Further, the set ${\cal I}_2$ is associated with operations that are neither at the start or the end of the stream, involving movement of jobs between buffers without changing the total number of jobs in the system.
We now have,
\begin{eqnarray*}
{\cal I}_{1,+} &=& \big\{ k \in \{1,\ldots,M\} \,:\, \bar{j}(k) = 1 \big \},\\
{\cal I}_{1,-} &=& \big\{ k \in \{1,\ldots,M\} \,:\, \bar{j}(k) = n_{\bar{i}(k)} \big \}, \\
{\cal I}_2 \,\,\,\, &=& \big\{ (k,k')   \in \{1,\ldots,M\}^2 \,:\,  \bar{i}(k) = \bar{i}(k'),\,   \bar{j}(k) = \bar{j}(k')-1         \big\}.
\end{eqnarray*}

Denote $L_\ell =  \sum_{i=1}^S | C_\ell(i)|$ for $\ell=1,2$. Then the total number of actions is $L= L_1\,L_2$ and the action drift matrix $\mathbf{D}$ is of dimension $L \times M$ as is consistent with previous notation. Denote for each operation $(i,j)$ and each $k \in \{1,\ldots,M\}$:
\[
 {\hat{\Delta}}_k(i,j)  =
 \begin{cases}
  \begin{cases}
   \mu_{i,0} &  k = \bar{k}(i,1), \\
   0       & \mbox{otherwise},
  \end{cases}
  &
  \,\, j=0, \\[14pt]
   \begin{cases}
   -\mu_{i,j} & k = \bar{k}(i,j), \\
   \mu_{i,j} & k = \bar{k}(i,j+1), \\
   0       & \mbox{otherwise},
  \end{cases}
  &
  \,\,  j \in \{1,\ldots,n_i-1\}, \\[24pt]
   \begin{cases}
   -\mu_{i,n_i} & k  = \bar{k}(i,n_i), \\
   0       & \mbox{otherwise},
  \end{cases}
  &
  \,\,  j=n_i. \\
 \end{cases}
\]


Let $\boldsymbol{\hat{\Delta}}(i,j) \in {\mathbb R}^M$ be the vector of these elements.
Now each row of ${\mathbf D}$ corresponds to two actions, $(i_1,j_1)$ such that $\sigma(i_1,j_1) = 1$ and $(i_2,j_2)$ such that $\sigma(i_2,j_2)=2$.  We denote this row by $\boldsymbol{\Delta}(i_1,j_1,i_2,j_2)'$ with,
\[
\boldsymbol{\Delta}(i_1,j_1,i_2,j_2) := \frac{1}{||\boldsymbol{\hat\Delta}(i_1,j_1) + \boldsymbol{\hat\Delta}(i_2,j_2)||_1}(\boldsymbol{\hat\Delta}(i_1,j_1) + \boldsymbol{\hat\Delta}(i_2,j_2)).
\]
Having defined the controlled random walk we are now ready to prove that it is non-stabalizable.
\begin{theorem}
The critical network is non-stabilizable.
\end{theorem}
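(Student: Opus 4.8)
This follows from Theorem~\ref{thm:genHCRWinstab} and Lemma~\ref{lemma:non-deg-1} once a suitable $\boldsymbol{\alpha}$ is exhibited, so the plan is essentially: guess the right linear harmonic function, and check two sets of linear identities. Introduce the sign $\varepsilon_{i,j}:=+1$ if $\sigma(i,j)=1$ and $\varepsilon_{i,j}:=-1$ if $\sigma(i,j)=2$, so that the criticality condition \eqref{eq:rlineCrit} reads $\sum_{j=0}^{n_i}\varepsilon_{i,j}\,\mu_{i,j}^{-1}=0$ for every stream $i$ (in particular both $C_1(i)$ and $C_2(i)$ are non-empty). For a queue $k$ with $i=\bar{i}(k)$ and $j=\bar{j}(k)$ I would take
\[
\alpha_k \;:=\; \sum_{l=0}^{j-1}\varepsilon_{i,l}\,\mu_{i,l}^{-1},
\]
that is, the mean processing requirement on server~$1$ minus that on server~$2$ accumulated over the operations $(i,0),\dots,(i,j-1)$ a job currently in queue $k$ has already completed; by \eqref{eq:rlineCrit} this also equals $-\sum_{l=j}^{n_i}\varepsilon_{i,l}\,\mu_{i,l}^{-1}$. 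Note $\boldsymbol{\alpha}\neq\mathbf{0}$, since $\alpha_{\bar{k}(i,1)}=\varepsilon_{i,0}\mu_{i,0}^{-1}\neq0$ for any $i$.

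The first step is to verify $\mathbf{D}\boldsymbol{\alpha}=\mathbf{0}$. Reading off $\boldsymbol{\hat{\Delta}}(i,j)$ from its definition, the scalar $\boldsymbol{\alpha}'\boldsymbol{\hat{\Delta}}(i,j)$ equals $\mu_{i,0}\alpha_{\bar{k}(i,1)}$ for $j=0$, equals $\mu_{i,j}\big(\alpha_{\bar{k}(i,j+1)}-\alpha_{\bar{k}(i,j)}\big)$ for $1\le j\le n_i-1$, and equals $-\mu_{i,n_i}\alpha_{\bar{k}(i,n_i)}$ for $j=n_i$. The definition of $\boldsymbol{\alpha}$ makes the first two of these equal to $\varepsilon_{i,0}$ and $\varepsilon_{i,j}$ by a one-line telescoping, and \eqref{eq:rlineCrit} makes the third equal to $\varepsilon_{i,n_i}$; hence $\boldsymbol{\alpha}'\boldsymbol{\hat{\Delta}}(i,j)=\varepsilon_{i,j}$ for every operation $(i,j)$. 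Since each row of $\mathbf{D}$ is a positive scalar multiple of $\boldsymbol{\hat{\Delta}}(i_1,j_1)+\boldsymbol{\hat{\Delta}}(i_2,j_2)$ with $\sigma(i_1,j_1)=1$ and $\sigma(i_2,j_2)=2$, its product with $\boldsymbol{\alpha}$ is a multiple of $\varepsilon_{i_1,j_1}+\varepsilon_{i_2,j_2}=1-1=0$. Thus $\mathbf{D}\boldsymbol{\alpha}=\mathbf{0}$, and in particular $\mathrm{rank}(\mathbf{D})<M$.

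The second step is to check the coordinate hypotheses of Lemma~\ref{lemma:non-deg-1} for this $\boldsymbol{\alpha}$. If $k\in{\cal I}_{1,+}$ then $k=\bar{k}(i,1)$ and $\alpha_k=\varepsilon_{i,0}\mu_{i,0}^{-1}\neq0$; if $k\in{\cal I}_{1,-}$ then $k=\bar{k}(i,n_i)$ and, by \eqref{eq:rlineCrit}, $\alpha_k=-\varepsilon_{i,n_i}\mu_{i,n_i}^{-1}\neq0$; so $\alpha_k\neq0$ for every $k\in{\cal I}_1={\cal I}_{1,+}\cup{\cal I}_{1,-}$. If $(k,k')\in{\cal I}_2$ then $k=\bar{k}(i,j)$ and $k'=\bar{k}(i,j+1)$ for some $1\le j\le n_i-1$, so $\alpha_{k'}-\alpha_k=\varepsilon_{i,j}\mu_{i,j}^{-1}\neq0$, i.e.\ $\alpha_k\neq\alpha_{k'}$. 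By Lemma~\ref{lemma:non-deg-1} the non-degeneracy condition of Theorem~\ref{thm:genHCRWinstab} holds, and that theorem yields non-stabilizability.

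Almost all of this is routine verification; the step I expect to require the most thought is identifying $\boldsymbol{\alpha}$ itself — the running imbalance of mean work between the two servers along each stream — together with the observation that the telescoping sum closes at the drain operation $(i,n_i)$ precisely when the criticality condition \eqref{eq:rlineCrit} holds, which is the same identity that annihilates $\boldsymbol{\alpha}'\boldsymbol{\hat{\Delta}}$ at the push and drain ends of a stream.
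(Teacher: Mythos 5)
Your proposal is correct and follows essentially the same route as the paper: the same three-case verification that $\boldsymbol{\hat{\Delta}}(i,j)'\boldsymbol\alpha$ is a sign depending only on $\sigma(i,j)$, the same use of criticality \eqref{eq:rlineCrit} at the drain operation, and the same appeal to Lemma~\ref{lemma:non-deg-1}. Your $\boldsymbol{\alpha}$ is exactly the negative of the paper's (your $\varepsilon_{i,j}=-(-1)^{\sigma(i,j)}$), which is immaterial since the kernel of $\mathbf{D}$ is a linear space.
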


\begin{proof}
We find an $\boldsymbol\alpha \in {\mathbb R}^M$ such that $\mathbf{D} \boldsymbol \alpha = \mathbf{0}$. The elements of $\boldsymbol\alpha$ are,
\begin{equation*}
\alpha_k = \sum_{j=0}^{\bar{j}(k)-1}\frac{1}{\mu_{\bar{i}(k),j}}(-1)^{\sigma(\bar{i}(k),j )}.
\end{equation*}
For any $i \in \{1,\ldots,S\}$, it is straightforward to verify that if $j=0$,
\[
\boldsymbol{\hat{\Delta}}(i,j)' \boldsymbol\alpha = \mu_{i,0}\frac{1}{\mu_{i,0}}(-1)^{\sigma(i,j )} =  (-1)^{\sigma(i,j )}.
\]
Further, if $j \in \{1,\ldots,n_{i}-1\}$ then, 
\[
\boldsymbol{\hat{\Delta}}(i,j)'\boldsymbol\alpha  = -\mu_{i,j} 
 \sum_{j'=0}^{j-1}\frac{1}{\mu_{i,j'}}(-1)^{\sigma(i,j' )}
 + \mu_{i,j}
 \sum_{j'=0}^{j}\frac{1}{\mu_{i,j'}}(-1)^{\sigma(i,j' )}
 =
 (-1)^ {\sigma(i,j)}.
 \]
 Further if $j = n_{i}$,
 \begin{eqnarray*}
\boldsymbol{\hat\Delta}(i,j)' \boldsymbol\alpha
&=&
 -\mu_{i,n_{i}}  \sum_{j'=0}^{n_{i}-1}\frac{1}{\mu_{i,j'}} (-1) ^{\sigma(i,j' )}\\
&=&
-\mu_{i,n_{i}} 
\bigg(
\sum_{j' \in C_2(i)} \mu_{i,j'}^{-1}
-
\sum_{j' \in C_1(i)} \mu_{i,j'}^{-1}
-
\mu_{i,n_i}^{-1}
(-1)^{\sigma(i,n_i)}
\bigg) =
(-1)^{\sigma(i,n_i)}.
 \end{eqnarray*}
 In the last equality we use the fact that the network is critical, \eqref{eq:rlineCrit}.  Hence for any $(i,j)$,
 \[
 \boldsymbol{\hat{\Delta}}(i,j)' \boldsymbol\alpha = (-1)^{\sigma(i,j)} .
 \]
Now for any row of $\mathbf{D}$, i.e. for any $(i_1,j_1)$, $(i_2,j_2)$ such that $\sigma(i_1,j_1)=1$, $\sigma(i_2,j_2)=2$ we have that,
{\small
\[
\boldsymbol{\alpha}' \boldsymbol{\Delta}(i_1,j_1,i_2,j_2) =
\frac{1}{||\boldsymbol{\hat\Delta}(i_1,j_1) + \boldsymbol{\hat\Delta}(i_2,j_2)||_1}
\big(
(-1)^{\sigma(i_1,j_1)} + (-1)^{\sigma(i_2,j_2)}
\big)
 = 0,
\]
}
as needed. Since $\boldsymbol\alpha \neq \mathbf{0}$, the rank condition of Theorem~\ref{thm:genHCRWinstab} holds.

To see that the non-degeneracy condition is satisfied, we use Lemma~\ref{lemma:non-deg-1}. 
First, for every $k \in {\cal I}_{1,+}$, it is evident that $\alpha_k \neq 0$ .
Further, for  every $k \in {\cal I}_{1,-}$, denote $i=\bar{i}(k)$, then
\[
\alpha_k = \sum_{j=0}^{n_i-1} \mu_{i,j}^{-1} (-1)^{\sigma(i,j)}=
\sum_{j \in C_2(i)} \mu_{i,j}^{-1}
-
\sum_{j \in C_1(i)} \mu_{i,j}^{-1}
-
\mu_{i,n_i}^{-1}
(-1)^{\sigma(i,n_i)}
\neq 0,
\]
where the last inequality follows from the fact the network is critical.
Finally, for every pair $(k,k') \in {\cal I}_2$ we verify
\[
\alpha_{k'} - \alpha_{k} = \frac{1}{\mu_{\bar{i}(k),\bar{j}(k)}} (-1)^{\sigma\left(\bar{i}(k),\bar{j}(k)\right)} \neq 0,
\] 
as required.
\end{proof}

\section{Closing Comments}

The interest in non-stabilizability of the fully-utilizing critical push-pull network stems from the fact that in symmetric non-critical cases it can be stabilized. This is true for both $\lambda<\mu$ and $\lambda>\mu$ in contrast to traditional queueing networks.  Hence our contribution that the network is non-stabilizable for $\lambda=\mu$ is of interest.

The consequences to more general networks as appearing in Section~3 are interesting in their own right, since Theorem~\ref{thm:genHCRWinstab} provides a general sufficient condition for non-stabilizability of quite general models.

It should be noted that the class of policies, $\{ {\cal P}\}$, we consider is that of stationary deterministic policies. One can also consider randomized policies where an action $a \in {\cal A}$ is taken with a given probability. Extending the non-stabilizability results to this case is straightforward and carries no surprise. Further, using continuous time martingales, it is merely technical matter to extend to non-stationary policies.

As opposed to non-deterministic and/or non-stationary policies, a non-trivial extension is to consider networks with general processing times (not necessarily exponentially distributed). In this case one way of providing a Markovian description to a network is based on residual service times (see for example \cite{GuoLefeberNazarathyWeissZhang11} and references there-in).  In that case, we conjecture that the non-stabilizability results of this paper carry over, yet proving such results may require a different set of tools than the one we have used here.

\bibliographystyle{elsarticle-num}
\bibliography{NazNanSalbib}
\end{document}